\def\R{{\mathbb R}}
\def\N{{\mathbb N}}
\def\Z{{\mathbb Z}}
\def\KK{{\mathcal{K}}}
\def\d{\delta}
\def\e{\varepsilon}
\def\a{\alpha}
\def\l{\lambda}
\def\G{\Gamma}
\def\L{\Lambda}
\newtheorem{theorem}{Theorem}[section]
\newtheorem*{theorem*}{Theorem}
\newtheorem{lemma}[theorem]{Lemma}
\newtheorem*{proposition*}{Proposition}
\numberwithin{equation}{section}
\begin{document}

\title {A ``rare'' plane set with Hausdorff dimension 2}

\author{Vladimir Eiderman and Michael Larsen}
\address{Vladimir Eiderman, Department of  Mathematics, Indiana University, Bloomington, IN}
\email{veiderma@indiana.edu}
\address{Michael Larsen,  Department of  Mathematics, Indiana University, Bloomington, IN}
 \email{mjlarsen@indiana.edu}
\thanks{ML was partially supported by NSF grant DMS-1702152.}

\begin{abstract}
We prove that for every at most countable family $\{f_k(x)\}$ of real functions on $[0,1)$ there is a single-valued real function $F(x)$, $x\in[0,1)$, such that the Hausdorff dimension of the graph $\G$ of $F(x)$ equals 2, and for every $C\in\R$ and every $k$, the intersection of $\G$ with the graph of the function $f_k(x)+C$ consists of at most one point. We also construct a family of functions of cardinality continuum and a function $F$ with  similar properties.
\end{abstract}
\maketitle

\section{Introduction}\label{introd}

The motivation of this note comes from the following question by Sergei Treil (August 2018, private communication). {\it Let $E$ be a set in $\R^n$, and let $K$ be an $n$-dimensional cone in $\R^n$. Suppose that for every line $l$ in $K$ and for every vector $\mathbf{b}$, the intersection $E\cap(l+\mathbf{b})$ is at most countable. Does it follow that the Hausdorff dimension of $E$ is less than $n$?} For $n=2$, an affirmative answer was given by Marstrand \cite{Mar} under the additional assumptions that $E$ is measurable with respect to $s$-dimensional Hausdorff measure $H^s$, and $0<H^s(E)<\infty$. Marstrand proved that if $0<s\le2$, then at $H^s$-almost all points $x\in E$ the following is true: for almost all straight lines $l$ passing through $x$, $H^{s-1}(E\cap l)<\infty$ and the Hausdorff dimension of $E\cap l$ is equal to $s-1$. See \cite{Ma1}, \cite{Ma2} and references therein for generalizations and related results.

We consider the case $n=2$ and try to approach the question from the other end: for which sets $K$ of directions (not necessarily $n$-dimensional) is the answer negative? Marstrand's theorem quoted above implies only that under additional assumptions, $K$ has zero measure. T.~Orponen obtained the estimate of the set of exceptional directions in terms of Hausdorff dimension in a much more general setting -- see \cite[Theorem~5.2]{Ma2}. Our goal is to construct examples which provide us with more detailed information about $K$ and intersections $E\cap(l+\mathbf{b})$. The case when $K$ consists of only one direction is known---see for example \cite{1}. Namely, there exists a function $F(x)$ (which can even be continuous!) whose graph has Hausdorff dimension 2. So, the intersection of the graph of $F(x)$ with every vertical line consists of at most one point.

We show that the answer to  Treil's question is negative in the case of any countable set of directions. In fact we prove a much more general assertion.

\begin{theorem}\label{th11} For every at most countable family $\mathcal F$ of real functions on $[0,1)$ there is a (single-valued) function $F(x)$, $x\in[0,1)$, such that

(i) the Hausdorff dimension of the graph $\G$ of $F(x)$ equals 2;

(ii) the intersection of $\G$ with the graph of any function $f_k(x)+C$, where $f_k(x)\in\mathcal F$, $C\in\R$, consists of at most one point.
\end{theorem}

Moreover, there are uncountable families of directions and functions $F$ with similar properties -- see Section~3 for a more general result.

Recall that the Hausdorff measure $H^s(E)$, $s\ge0$, and the Hausdorff dimension dim${}_H(E)$ of a set $E$ are defined by the equalities
\begin{align*}
H^s(E)& =\lim_{\d\to0}\inf_{r_i<\d}\sum r_i^s.\\
\text{dim}_H(E)& =\sup\{s:H^s(E)=\infty\}=\inf\{s:H^s(E)=0\},
\end{align*}
where the $\inf_{r_i<\d}$ is taken over all at most countable covers of $E$ by disks with radii $r_i<\d$.

T.~Keleti \cite{2} constructed a compact subset of $\R$ with Hausdorff dimension 1
that intersects each of its non-identical translates in at most one point. This result and our  theorem have a similar flavor, but their proofs are completely different.

The authors are grateful to Professor Sergei Treil for very useful discussion.

\section{Proof of Theorem \ref{th11}}\label{sect2}

Every $x\in\R$ can be written in the form
$$
x=\lfloor x\rfloor+\{x\}=\lfloor x\rfloor+\sum_{i=1}^\infty x_i2^{-i},
$$
where $\lfloor x\rfloor,\ \{x\}$ are the integer and the fractional parts of $x$ correspondingly, and each $x_i$ is either 0 or 1. We write $(0100\cdots0\cdots)$ instead of $(0011\cdots1\cdots)$. In other words, the binary expansion of every number $\{x\}$ in $[0,1)$ contains infinitely many zeros. Such a representation is unique.

We choose a set $T$ of positive integers and a collection of 3-element sets $S_{ij}$ of positive integers indexed by ordered pairs $(i,j)$ of positive integers in such a way that the following statements hold:

\begin{enumerate}
\item The density of $T$ in the positive integers is $1$.
\item Each $S_{ij}$ is of the form  $\{s_{ij},s_{ij}+1,s_{ij}+2\}$ for some positive integer $s_{ij}$.
\item All sets $S_{ij}$ and $T$ are mutually disjoint.
\end{enumerate}

If $x = \sum_i x_i 2^{-i} \in [0,1)$ and $s$ is a positive integer, we define
\begin{equation}
\label{g-def}
g_s(x) := x_s 2^{-s} + x_{s+1} 2^{-s-1}.
\end{equation}
We extend $g_s(x)$ to a function on $\R$ by imposing periodicity: $g_s(x+1)=g_s(x)$. In other words, we set $g(x):=g(\{x\})$.

\begin{lemma}
\label{reading}
Let $s$ be a positive integer,
$U$ a subset of the positive integers which is disjoint from $\{s,s+1,s+2\}$, and $a\in \R$.
Let
$$A := g_s(a)+\sum_{i\in U} 2^{-i} - a;\ \ B:= g_s(2^{-s}+a)+\sum_{i\in U} 2^{-i} - a.$$
Then
$$\{ 2^{s-1} A \} \in [0,1/8]\cup[3/4,1);\quad \{ 2^{s-1} B \} \in [1/4,5/8].$$
\end{lemma}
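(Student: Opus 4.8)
The plan is to unwind the definitions and reduce everything to a direct computation of $\{2^{s-1}A\}$ and $\{2^{s-1}B\}$. The first observation is that the quantity $\sum_{i\in U}2^{-i}$ and the quantity $a$ appear in both $A$ and $B$ in exactly the same way, so after multiplying by $2^{s-1}$ and taking fractional parts, the contributions of these "nuisance'' terms should collapse into a single common offset. Concretely, I would write $2^{s-1}A = 2^{s-1}g_s(a) + 2^{s-1}\bigl(\sum_{i\in U}2^{-i}-a\bigr)$ and similarly for $B$, and then track the fractional part of each piece separately, using that $\{u+v\}$ is determined by $\{u\}+\{v\}$ modulo $1$.

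Next I would compute the "signal'' terms $2^{s-1}g_s(a)$ and $2^{s-1}g_s(2^{-s}+a)$ explicitly. By definition $g_s(a)=a_s2^{-s}+a_{s+1}2^{-s-1}$, so $2^{s-1}g_s(a)=a_s/2+a_{s+1}/4$, which takes one of the four values $0,1/4,1/2,3/4$ depending on the two binary digits $a_s,a_{s+1}$. The key point for $B$ is to understand how the substitution $a\mapsto 2^{-s}+a$ changes these digits: adding $2^{-s}$ flips the digit in position $s$ and, in the carrying case, propagates upward. Here the hypothesis that $U$ is disjoint from $\{s,s+1,s+2\}$ and the normalization that binary expansions contain infinitely many zeros become important, because they control exactly which digits $a_s,a_{s+1}$ are and how the carry behaves; I expect $2^{s-1}g_s(2^{-s}+a)$ to land in a complementary pair of values such as $\{1/4,3/4\}$ while $2^{s-1}g_s(a)\in\{0,1/2\}$ in the relevant cases. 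This case analysis on the digits $a_s,a_{s+1}$ (and the possible carry) is the main obstacle: one must check each branch and confirm the claimed intervals hold uniformly.

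After that, I would handle the common offset. Since $U$ is disjoint from $\{s,s+1,s+2\}$, the terms $2^{s-1}\cdot 2^{-i}=2^{s-1-i}$ for $i\in U$ are either integers (when $i<s$, contributing nothing modulo $1$) or dyadic rationals whose denominators are controlled, and crucially the positions $i=s,s+1,s+2$ are excluded, so the fractional contribution of $\sum_{i\in U}2^{-i}$ near scale $s$ is bounded away from the dangerous midpoints. Combined with the $-a$ term, I would argue that $\{2^{s-1}(\sum_{i\in U}2^{-i}-a)\}$ lies in a small interval whose width is at most $1/8$, governed by the tail digits $a_{s+2},a_{s+3},\dots$ (again using the infinitely-many-zeros normalization to avoid boundary coincidences).

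Finally, I would combine the signal and the offset: adding an offset of length at most $1/8$ to the discrete values computed for the signal term, and reducing modulo $1$, should place $\{2^{s-1}A\}$ into $[0,1/8]\cup[3/4,1)$ and $\{2^{s-1}B\}$ into $[1/4,5/8]$, which are separated by gaps of width exactly $1/8$. The whole argument is essentially bookkeeping once the digit analysis is set up correctly; the delicate part is ensuring the carry in the $B$ computation and the boundary behavior of the offset are handled so that the two fractional parts land in disjoint intervals with the stated margins. I would organize the write-up as: (1) separate signal from offset, (2) tabulate the four digit cases for the signal in both $A$ and $B$, (3) bound the offset by $1/8$, and (4) add and reduce.
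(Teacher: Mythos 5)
There is a genuine gap in your step (3), and it is not a technicality: the claim that $\{2^{s-1}(\sum_{i\in U}2^{-i}-a)\}$ lies in an interval of width at most $1/8$ ``governed by the tail digits $a_{s+2},a_{s+3},\dots$'' is false. The hypothesis on $U$ controls only the $\sum_{i\in U}2^{-i}$ part; the $-a$ part contributes $-(a_s/2+a_{s+1}/4+\tau)$ modulo $1$, where $\tau=2^{s-1}\sum_{i\ge s+2}a_i2^{-i}\in[0,1/4]$. Since $a\in\R$ is arbitrary, the digits $a_s,a_{s+1}$ are unconstrained, so your ``offset'' ranges over essentially all of $[0,1)$ as $a$ varies (and even its tail part has width $1/4$, not $1/8$; note the target sets have total length $3/8$). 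If the offset really were confined to a fixed short interval while the signal $2^{s-1}g_s(a)=a_s/2+a_{s+1}/4$ independently took all four values $0,1/4,1/2,3/4$, then adding them would fill out most of $[0,1)$ and the lemma would be false. The point your decomposition hides is a cancellation: $g_s(a)$ is exactly the part of $\{a\}$ carried by the digits in positions $s$ and $s+1$, so the quantity that is actually controlled is $a-g_s(a)$, whose binary expansion has zeros in positions $s$ and $s+1$ and therefore equals $n/2^{s-1}+\varepsilon$ with $n\in\Z$ and $\varepsilon\in[0,2^{-s-1}]$. You must treat $g_s(a)-a$ as a single block; once you do, the four-case tabulation of the signal disappears entirely.

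The paper's proof does exactly this: it writes $\sum_{i\in U}2^{-i}=m/2^{s-1}+\delta$ with $\delta\in[0,2^{-s-2}]$ (this is where disjointness of $U$ from $\{s,s+1,s+2\}$ enters) and $a-g_s(a)=n/2^{s-1}+\varepsilon$ with $\varepsilon\in[0,2^{-s-1}]$, whence $2^{s-1}A=m-n+2^{s-1}(\delta-\varepsilon)\in[m-n-1/4,\,m-n+1/8]$. For $B$ it sidesteps your carry analysis altogether by writing $a-g_s(2^{-s}+a)=\bigl((2^{-s}+a)-g_s(2^{-s}+a)\bigr)-2^{-s}$ and applying the same structural fact to the number $2^{-s}+a$; the term $-2^{-s}=-\tfrac{1/2}{2^{s-1}}$ then produces the clean half-integer shift that separates the two target sets. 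If you insist on your route, you would need to (a) verify that adding $2^{-s}$ changes only digits in positions $\le s$, so that $g_s(2^{-s}+a)=g_s(a)+(1-2a_s)2^{-s}$, and (b) carry the $a_s,a_{s+1}$ dependence of the offset all the way to the final addition so that it cancels against the signal---doable, but strictly more work and more error-prone than the paper's two-line computation.
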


\begin{proof}
We partition $U$ into $U^+ := U\cap [1,s-1]$ and $U^- := U\cap [s+3,\infty)$.  Thus
$$\sum_{i\in U} 2^{-i} = \sum_{i\in U^+} 2^{-i} + \sum_{i\in U^-} 2^{-i} = \frac{m}{2^{s-1}} + \delta$$
for some $m\in\Z$, $\delta \in [0,2^{-s-2}]$.  As $\{ 2^{s-1} A \}$ and $\{ 2^{s-1} B \}$
only depend on $\{a\}$ and $U^-$, we may assume $a\in [0,1)$.
We can therefore write

$$
a - g_s(a) = \sum_{i\in A^+} 2^{-i} + \sum_{i\in A^-} 2^{-i},
$$
where $A^+\subset [1,s-1]$ and $A^-\subset [s+2,\infty)$ are sets of integers.
Thus,

$$a-g_s(a) = \frac n{2^{s-1}}+\e$$
for some $n\in\Z$, $\e \in [0,2^{-s-1}]$.    It follows that
$$2^{s-1}A = m - n + 2^{s-1}(\delta-\e)\in [m-n-1/4,m-n+1/8].$$
Likewise,
$$a - g_s(2^{-s} + a)  = (2^{-s}+a)  - g_s(2^{-s} + a) - 2^{-s}= \frac{n-1/2}{2^{s-1}}+\e$$
for some integer $n$ and $\e \in [0,2^{-s-1}]$, so
$$2^{s-1}B = m - n + 1/2 + 2^{s-1}(\delta-\e)\in [m-n+1/4,m-n+5/8].$$
Lemma \ref{reading} is proved.
\end{proof}

For positive integers $i$ and $j$, we define
$$h_{ij}(x) = g_{s_{ij}}(f_i(x) + x_j 2^{-s_{ij}}),\quad f_i\in\mathcal F.$$
Define $F(x)$, $x\in[0,1)$, by the equality
\begin{equation}\label{f1}
F(x)=F\bigg(\sum_{i=1}^\infty x_i2^{-i}\bigg)=\sum_{i\in T} x_{i^2}2^{-i}+\sum_{i=1}^\infty \sum_{j=1}^\infty h_{ij}(x).
\end{equation}

\begin{lemma}
\label{two}
The function $F(x) - f_i (x)$ is one-to-one on $[0,1)$ for every $f_i\in\mathcal F$.
\end{lemma}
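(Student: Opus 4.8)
The plan is to show that, for a fixed $f_{i_0}\in\mathcal F$, the single real number $w:=F(x)-f_{i_0}(x)$ encodes every binary digit $x_j$ of $x$. Once each $x_j$ is recoverable from $w$, the canonical (infinitely-many-zeros) binary expansion shows that $w$ determines $x$, which is precisely the assertion that $F-f_{i_0}$ is one-to-one: if $F(x)-f_{i_0}(x)=F(y)-f_{i_0}(y)$ then all digits of $x$ and $y$ agree, so $x=y$.

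First I would fix an arbitrary $j$, set $s:=s_{i_0 j}$ so that $S_{i_0 j}=\{s,s+1,s+2\}$, and split the single term $h_{i_0 j}$ off from the defining series \eqref{f1}:
$$ w = h_{i_0 j}(x) + R_j(x) - f_{i_0}(x),\qquad R_j(x):=\sum_{k\in T} x_{k^2}2^{-k}+\sum_{(i',j')\neq(i_0,j)} h_{i'j'}(x). $$
The structural heart of the argument is that each $h_{i'j'}(x)=g_{s_{i'j'}}(\cdots)$ is supported only on the two binary positions $s_{i'j'}$ and $s_{i'j'}+1$, which lie inside the block $S_{i'j'}$, while $\sum_{k\in T}x_{k^2}2^{-k}$ is supported on positions of $T$. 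By the mutual disjointness of $T$ and all the blocks $S_{i'j'}$ (property (3) of the partition), these positions are pairwise distinct, so no carrying occurs and $R_j(x)$ is a sum of distinct powers $2^{-l}$ with coefficients in $\{0,1\}$; that is, $R_j(x)=\sum_{l\in U_j}2^{-l}$ for a set $U_j$ that is disjoint from $S_{i_0 j}$.

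This places $w$ exactly in the form governed by Lemma \ref{reading}, with $a=f_{i_0}(x)$ and $U=U_j$. If $x_j=0$ then $h_{i_0 j}(x)=g_s(f_{i_0}(x))$, whence $w=A$ and $\{2^{s-1}w\}\in[0,1/8]\cup[3/4,1)$; if $x_j=1$ then $h_{i_0 j}(x)=g_s(2^{-s}+f_{i_0}(x))$, whence $w=B$ and $\{2^{s-1}w\}\in[1/4,5/8]$. Since the two target intervals are disjoint, the value $\{2^{s-1}w\}$ unambiguously reads off $x_j$. Letting $j$ run over all positive integers recovers every digit of $x$ from $w$, completing the proof.

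The hard part is not the final invocation of Lemma \ref{reading}, which is mechanical once the setup is in place, but rather verifying that $R_j(x)$ genuinely has the shape $\sum_{l\in U_j}2^{-l}$ with $U_j\cap S_{i_0 j}=\varnothing$: one must check that the various contributions never collide (so that all coefficients stay in $\{0,1\}$ and no carrying spills into positions $s,s+1,s+2$). This is exactly where the disjointness of the partition and the two-digit support of each $g_s$ are essential, and it is the one step I would write out with care.
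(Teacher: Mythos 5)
Your proposal is correct and follows essentially the same route as the paper: split off the single term $h_{i_0 j}$, observe that the remaining contributions are supported on binary positions disjoint from $S_{i_0 j}$ so the residue has the form $\sum_{l\in U}2^{-l}$ with $U\cap S_{i_0 j}=\varnothing$, and then apply Lemma~\ref{reading} to separate the cases $x_j=0$ and $x_j=1$ via the disjoint intervals for $\{2^{s-1}w\}$. The only cosmetic difference is that you phrase the conclusion as decoding every digit $x_j$ from $w$, whereas the paper takes $x\ne y$, picks a $j$ with $x_j\ne y_j$, and concludes the two fractional parts differ --- these are the same argument.
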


\begin{proof}
Fix $i$, $j$, and $x\in [0,1)$, and observe that $h_{ij}(x)$ is a sum of $2^{-k}$ as $k$ ranges
over some subset of $\{s_{ij},s_{ij}+1\}$, while $\sum_{k\in T} x_{k^2} 2^{-k}$ and $\sum_{(k,l)\ne (i,j)} h_{kl}(x)$ are sums of $2^{-k}$ over some subsets of $T$ and of $\N\setminus T$ which are both disjoint from  $S_{ij}$. Thus,
$$
F(x)-f_i(x) = \begin{cases}g_{s_{ij}}(f_i(x)) + \sum_{i\in U} 2^{-i}-f_i(x), &x_j=0,\\
g_{s_{ij}}(f_i(x)+2^{-s_{ij}}) + \sum_{i\in U} 2^{-i}-f_i(x), & x_j=1,
\end{cases}$$
where $U$ is a set of positive integers which is disjoint from $S_{ij}$.

Choose $x\ne y$. There exists $j$ such that $x_j\ne y_j$. According to Lemma~\ref{reading},
$$\{2^{s_{ij}-1}(F(x)-f_i(x))\}\ne\{2^{s_{ij}-1}(F(y)-f_i(y))\}$$
for every positive integer $i$. Hence, $F(x)-f_i(x)\ne F(y)-f_i(y)$.
\end{proof}
Note that Lemma~\ref{two} implies (ii) of the main theorem.

The following lemma establishes the validity of (i).

\begin{lemma}\label{le1}
Let $T$ and $S_{ij}$ be defined as above, and for each pair of positive integers $(i,j)$, let $h_{ij}\colon \R\to \R$
denote any function whose range is contained in
$$\{k2^{-s_{ij}-1}: k=0,1,2,3\}.$$
Let $\G$ denote the graph of $F(x)$ defined as in (\ref{f1}).  Then
$$
\dim_H(\G)=2.
$$
\end{lemma}

The proof of Lemma \ref{le1} is based on the following assertion.

\begin{lemma}\label{le2} Let functions $h_{ij}(x)$ be chosen as above. For every $\a<2$ and $\e>0$ there exists $\d=\d(\a,\e)>0$ with the following property. For every disk $D(r)$ with radius $r<\d$, the length of the projection of $\G\cap D(r)$ onto the $x$-axis is less than $\e r^\a$.
\end{lemma}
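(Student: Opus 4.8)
The plan is to bound, instead of the projection itself, the larger set
$$E:=\{x\in[x_0-r,x_0+r]:|F(x)-y_0|\le r\},$$
where $(x_0,y_0)$ denotes the center of $D(r)$: every point of $\G\cap D(r)$ has the form $(x,F(x))$ with $|x-x_0|\le r$ and $|F(x)-y_0|\le r$, so the projection is contained in $E$ and it suffices to prove $\mathrm{Leb}(E)<\e r^\a$. The one structural fact I will use is that the binary digit of $F(x)$ at every position $i\in T$ equals $x_{i^2}$, \emph{regardless} of the family $\{h_{ij}\}$. Indeed, each $h_{ij}(x)$ is a dyadic number supported on the positions $s_{ij},s_{ij}+1$, the sets $S_{ij}$ are disjoint and carry the buffer entry $s_{ij}+2$, and they avoid $T$; hence $\sum_{ij}h_{ij}(x)$ is formed without any carrying and contributes nothing at the positions of $T$, where $F$ therefore reads off $\sum_{i\in T}x_{i^2}2^{-i}$ digit by digit.

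Fix $r<\d$ and let $k$ be the integer with $2^{-k-1}<r\le2^{-k}$. Since $[x_0-r,x_0+r]$ has length at most $2^{-k+1}$, it meets at most two dyadic intervals of generation $k-1$, so on $E$ the input digits $x_1,\dots,x_{k-1}$ are confined to one of at most two strings; likewise $|F(x)-y_0|\le r$ confines the output digits $F_1,\dots,F_{k-1}$ to at most two strings. Once an admissible output prefix is fixed, the identity $F_i=x_{i^2}$ for $i\in T$ converts each coordinate $F_i$ with $i\in T\cap[1,k-1]$ into a prescription of the single input digit $x_{i^2}$.

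It remains to count. After fixing one of the (at most four) pairs of input/output prefixes, $E$ lies in the set of $x\in[0,1)$ with prescribed binary digits at every position of
$$P:=\{1,\dots,k-1\}\cup\{\,i^2:\ i\in T,\ \lceil\sqrt k\,\rceil\le i\le k-1\,\},$$
the second block being exactly those output prescriptions $x_{i^2}$ that land beyond position $k-1$ and so are new. As $i\mapsto i^2$ is injective and $T$ has density $1$, this block has $k-\sqrt k-o(k)$ elements, whence $|P|=2k-o(k)$; prescribing $|P|$ digits gives a set of measure $2^{-|P|}$. Therefore
$$\mathrm{Leb}(E)\le 4\cdot2^{-|P|}=2^{-2k+o(k)}=r^{\,2-o(1)}.$$
Because $\a<2$ strictly, $r^{2-o(1)}/r^\a=r^{2-\a-o(1)}\to0$, so choosing $\d(\a,\e)$ small enough forces $\mathrm{Leb}(E)<\e r^\a$ whenever $r<\d$.

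The main obstacle is the bookkeeping that converts interval membership into fixed digits: one must handle the nonuniqueness of binary expansions and the carries near dyadic endpoints (this is why two prefixes are allowed on each side), and verify that the density-$1$ hypothesis on $T$ genuinely produces the exponent $2k-o(k)$ and not a weaker bound. The point on which the whole estimate rests is the structural claim of the first paragraph, namely that the $h_{ij}$ leave the $T$-positions of $F$ untouched; this is what makes $F_i=x_{i^2}$ hold for an arbitrary family $\{h_{ij}\}$, as the statement of the lemma requires.
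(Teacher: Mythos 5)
Your proof is correct and is essentially the same as the paper's: both arguments rest on the digit identity $y_i=x_{i^2}$ for $i\in T$ (justified by the disjointness of the $S_{ij}$ from $T$ and the buffer positions $s_{ij}+2$ preventing carries), localize to a bounded number of dyadic cells at scale $r$, and count roughly $2N-o(N)$ prescribed binary digits of $x$, concluding from $\a<2$ and the density-one hypothesis on $T$. The only differences are cosmetic bookkeeping (four prefix pairs versus nine dyadic squares, and bounding the Lebesgue measure of a superset $E$ rather than the projection of $Q\cap\G$ directly).
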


Let us show that Lemma \ref{le2} implies Lemma \ref{le1}.

\begin{proof}[Proof of Lemma \ref{le1}] Suppose that $\text{dim}_H(\G)<2$. Choose $\beta$ so that $\text{dim}_H(\G)<\beta<2$. Let $\d:=\d(\beta,1)$ be the number in Lemma \ref{le2}. There is an at most countable family of disks $D_i(r_i)$ such that $r_i<\d$, $\G\subset\bigcup_iD_i(r_i)$, and $\sum_ir_i^\beta<1$. We have
$$
|\text{Pr}(\G)|\le\sum_i|\text{Pr}(\G\cap D_i(r_i))|\le\sum_ir_i^\beta<1,
$$
where $|\text{Pr}(A)|$ denotes the length of the projection of a set $A$ onto the $x$-axis. Since $\G$ projects onto the whole of $[0,1)$, this contradiction proves that $\text{dim}_H(\G)=2$.
\end{proof}

\begin{proof}[Proof of Lemma \ref{le2}]
By (\ref{f1}), for every $x\in[0,1)$, the value $y = F(x)$ can be written as $\sum_{i\in U} 2^{-i}$ for some set $U$ of positive integers which does not contain any integer of the form $s_{ij}+2$ but does contain $i$ whenever $i\in T$ and $x_{i^2} = 1$.  Since there are infinitely many integers of the form $s_{ij}+2$, we have
$y_i = 1$ if and only if $i\in U$. Hence, $y_i = x_{i^2}$ for $i\in T$.

We may assume that $\d<1/2$. Let $N$ be such that $2^{-N-1}\le r<2^{-N}$. Then a disk $D(r)$ intersects at most nine dyadic squares with side length $2^{-N}$. Hence, it suffices to prove the existence of $N_0$ such that for every open dyadic square $Q$ with side length less than $2^{-N}$, where $N>N_0$,
$$
|\text{Pr}(Q\cap\G)|<\e2^{-N\a},\quad N>N_0.
$$
Fix $Q$. For all points $(x,y)\in Q$, the first $N$ digits $x_1,\dots,x_N$ in the binary representations of $x$ are determined by $Q$, and likewise for $y_1,\ldots,y_N$. Let $M=M(N)$ be the number of positive integers in the set $[1,N]\cap T$.
By (\ref{f1}), for all $(x,y)\in \Gamma$ and $n\in T$, we have $x_{n^2} = y_n$.  Therefore, for all $(x,y)\in \Gamma\cap Q$, $x_m$ is constant for $m\in [1,N]$ and also for $m\in \{n^2: n\in T\cap [1,N]\}$.
The union of these two sets has at least $M+N-\sqrt N$ elements.
Since $\lim_{N\to\infty}M/N=1$ and $\a < 2$, we have
$$|\text{Pr}(Q\cap\G)|\le2^{-(N+M-\sqrt N)}=2^{-N(1+M/N-\a-\sqrt N/N)}2^{-N\a}<\e2^{-N\a},$$
if $N$ is sufficiently large. Lemma \ref{le2} is proved
\end{proof}

\section{Families $\mathcal F$ of cardinality continuum}\label{sect3}

\begin{theorem}\label{th31} For every real function $f(x)$ on [0,1) there exist a set $\mathcal K$ of real numbers and a single-valued function $F:[0,1)\to[0,1)$ such that

(i) $\KK$ has cardinality continuum;

(ii) the Hausdorff dimension of the graph $\G$ of $F(x)$ equals 2;

(iii) the intersection of $\G$ with the graph of any function $kf(x)+C$, where $k\in\mathcal K$, $C\in\R$, consists of at most one point.
\end{theorem}

\begin{proof} We may assume that $0\not\in\KK$. Let
$$
\L:=\{\l:\l=1/k,\ k\in\KK\}.
$$
Then (iii) in Theorem \ref{th31} is equivalent to the following statement: the function $\l F(x)-f(x)$, $x\in[0,1)$, is one-to-one for every $\l\in\L$.

Let
$$
s_0=5,\quad s_j=(j+1)s_{j-1},\ j\ge1,\quad V:=[0,5]\cup\bigg(\bigcup_{j=1}^\infty[s_j-s_{j-1},s_j+s_{j-1}]\bigg),\quad T:=\N\setminus V.
$$
Note that the density of $T$ in the positive integers is 1, and $i\ge16$ for $i\in T$.

Define $F(x),\ x\in[0,1)$, by the equality
\begin{equation}
\label{f2}
F(x)=F\bigg(\sum_{i=1}^\infty x_i2^{-i}\bigg)=\sum_{i\in T} x_{i^2}2^{-i}+\sum_{j=1}^\infty h_{j}(x),
\end{equation}
where $h_{j}(x) = g_{s_{j}}(f(x) + x_j 2^{-s_{j}})$, and $g_s$ is defined by (\ref{g-def}).
Essentially the same arguments as in the proof of Lemmas \ref{le1} and \ref{le2} yield (ii). Let
$$
\L:=\{\l\in\R:\l=1+\sum_{i=1}^\infty\l_i2^{-s_{i}}\},
$$
where each $\l_i$ is either 0 or 1. Obviously, $\L$ (and hence $\KK$) has cardinality continuum.

To establish (iii), it suffices to prove the following claim: for every $x\in[0,1)$, $j\in\N$, and $\l\in\L$, there exists a set $U=U(x,j,\l)$ of nonnegative integers which is disjoint from $\{s_j,s_j+1,s_j+2\}$ and such that
\begin{equation}\label{lambda-F}
\l F(x)=h_{j}(x)+\sum_{i\in U}2^{-i}.
\end{equation}
Indeed, the claim implies that $\l F(x)-f(x)$ is one-to-one for every $\l\in\L$ exactly by the same arguments as in the proof of Lemma \ref{two}.

We fix $x\in[0,1)$, $j\in\N$, and $\l\in\L$, and split $T$ into $T^-=T\cap[0,s_j-s_{j-1}-1]$ and
$T^+=T\cap[s_j+s_{j-1}+1,\infty)$ (for $j=1$, the set $T^-$ is empty). Thus,
\begin{align*}
\l F(x)& =F(x)+\bigg(\sum_{k=1}^\infty\l_k2^{-s_{k}}\bigg)\cdot\bigg(\sum_{i\in T} x_{i^2}2^{-i}+\sum_{i=1}^\infty h_{i}(x)\bigg)\\
& =F(x)+\sum_{k=1}^{j-1}\sum_{i\in T^-}\l_k x_{i^2}2^{-(s_{k}+i)}+
\sum_{k=1}^{j-1}\sum_{i\in T^+}\l_k x_{i^2}2^{-(s_{k}+i)}\\
&+\sum_{k=j}^\infty\sum_{i\in T}\l_k x_{i^2}2^{-(s_{k}+i)}+\sum_{k=1}^\infty\sum_{i=1}^\infty\l_k 2^{-s_k}h_{i}(x),\\
\end{align*}
which we write $F(x)+\Sigma_1+\Sigma_2+\Sigma_3+\Sigma_4$.
Note that for $j=1$,  $\Sigma_1=\Sigma_2=0$.

By \eqref{f2}, we may write $F(x)$ in the form
$$
F(x)=\sum_{i\in I_F^-}2^{-i}+h_j(x)+\sum_{i\in I_F^+}2^{-i},
$$
where $I_F^-\subset[0,s_j-s_{j-1}-1],\ I_F^+\subset[s_j+s_{j-1}+1,\infty)$; for $j=1$, the set $I_F^-$ is empty.

For the sum $\Sigma_1$ we have $s_{k}+i\le s_{j-1}+s_j-s_{j-1}-1=s_{j}-1$. Hence,
$$
\Sigma_1=\sum_{i\in I_1}2^{-i},\quad I_1\subset[0,s_{j-1}].
$$
As $T^+\subset [s_j+s_{j-1}-1,\infty)$,
$$
\Sigma_2<\sum_{k=1}^{j-1}2^{-(s_{k}+s_j+s_{j-1})}<2^{-(s_j+s_{j-1})}.
$$
Since $i\ge16$ for $i\in T$,
$$
\Sigma_3<\sum_{k=j}^{\infty}2^{-(s_{k}+15)}<2^{-(s_j+14)}.
$$
Finally, $\Sigma_4$ may be decomposed into two sums. The first sum
$$
\Sigma_{4,1}=\sum_{k=1}^{j-1}\sum_{i=1}^{j-1}\l_k 2^{-s_k}h_{i}(x)=\sum_{i\in I_4^-}2^{-i},\quad I_4^-\subset[0,s_{j}),
$$
since $s_{k}+s_i+1<s_{j}$. In the second sum at least one index $k$ or $i$ is greater than or equal to $j$. If $i\ge j$, we have
\begin{align*}
\sum_{k=1}^\infty\sum_{i=j}^\infty\l_k 2^{-s_k}h_{i}(x)&\le\sum_{k=1}^\infty\sum_{i=j}^\infty2^{-s_k}(2^{-s_i}+2^{-s_i-1})\\
&<\sum_{k=1}^\infty2^{-s_k}\cdot2^{-s_j+2}\le2^{-s_j-7},
\end{align*}
since $s_k\le10$ as $k\ge1$. The case $k\ge j$ is analogous to the previous one.

Combining these estimates, we obtain \eqref{lambda-F}, which implies the theorem.
\end{proof}

\end{document}